\newtheorem{dfn}{Definition}[section]
\newtheorem{thm}[dfn]{Theorem}
\newtheorem{cor}[dfn]{Corollary}
\theoremstyle{remark}
\newtheorem{rem}[dfn]{Remark}
\newcommand{\Met}{{\mathit{Met}}}
\newcommand{\scal}{\operatorname{scal}}
\newcommand{\Ric}{\operatorname{Ric}}
\newcommand{\vol}{\operatorname{vol}}
\newcommand{\dvol}{\operatorname{dvol}}
\newcommand{\cE}{\mathcal{E}}
\newcommand{\R}{\mathbb R}
\newcommand{\const}{\mathrm{const}}
\newcommand{\Sph}{\mathbb{S}}
\renewcommand{\Vert}{\operatorname{Vert}}
\newcommand{\conv}{\operatorname{conv}}
\title[Variations of the discrete Hilbert-Einstein functional]{Variational properties of the discrete Hilbert-Einstein functional}
\author{Ivan Izmestiev}
\thanks{Supported by the European Research Council under the European Union's Seventh Framework Programme (FP7/2007-2013)/\allowbreak ERC Grant agreement no.~247029-SDModels}
\begin{document}

\begin{abstract}
This is a survey on rigidity and geometrization results obtained with the help of the discrete Hilbert-Einstein functional, written for the proceedings of the ``Discrete Curvature'' colloquium in Luminy.

\end{abstract}

\maketitle

\section{Introducing the functional}
\subsection{Smooth case}
Let $M$ be a smooth compact manifold without boundary.
In Riemannian geometry, the \emph{Hilbert-Einstein functional} is a function on the space $\Met_M$ of Riemannian metrics on $M$ which associates to a metric $g$ the integral of half its scalar curvature:
$$
S \colon \Met_M \to \R, \quad S(g) = \frac12 \int_M \scal_g \dvol_g
$$

If $\dim M = 2$, then we have $\scal_g = 2K_g$, where $K_g$ is the Gauss curvature. Hence by the Gauss-Bonnet theorem
$$
S(g) = 2\pi \chi(M)
$$
is independent of the metric $g$.

Starting from $\dim M = 3$, the functional $S$ becomes more interesting. Denote
$$
S'_h = \left.\frac{d}{dt}\right|_{t=0} S(g + th),
$$
where $h$ is the field of symmetric bilinear forms on $M$.
\begin{thm}
The first variation of $S$ is given by the formula
$$
S'_h = \frac12 \int_M \left\langle \frac{\scal_g}{2} g - \Ric_g, h \right\rangle \dvol_g
$$
\end{thm}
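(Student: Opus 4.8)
\subsection*{Proof proposal}

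The plan is to differentiate the two factors in the integrand separately using the product rule:
$$
S'_h = \frac12 \int_M \left[ \left(\left.\tfrac{d}{dt}\right|_{t=0}\scal_{g+th}\right)\dvol_g + \scal_g \left(\left.\tfrac{d}{dt}\right|_{t=0}\dvol_{g+th}\right)\right],
$$
so the whole computation reduces to two local pointwise variational formulas: one for the volume density and one for the scalar curvature. For the volume form I would use $\dvol_g = \sqrt{\det g}\,dx$ together with Jacobi's formula $\delta \det g = \det g \cdot g^{ij}h_{ij}$, giving $\left.\tfrac{d}{dt}\right|_{t=0}\dvol_{g+th} = \tfrac12 \langle g, h\rangle\,\dvol_g$, where $\langle g,h\rangle = g^{ij}h_{ij} = \operatorname{tr}_g h$. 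This step is routine.

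The substantial part is the variation of $\scal_g = g^{ij}R_{ij}$. I would first record $\left.\tfrac{d}{dt}\right|_{t=0} g^{ij} = -g^{ik}g^{jl}h_{kl}$ (from differentiating $g^{ij}g_{jk}=\delta^i_k$), which already contributes the term $-\langle\Ric_g, h\rangle$ upon contraction with $R_{ij}$. Next I would compute $\delta\Gamma^k_{ij} = \tfrac12 g^{kl}(\nabla_i h_{jl} + \nabla_j h_{il} - \nabla_l h_{ij})$; the point to stress is that although the Christoffel symbols are not tensorial, their variation \emph{is} a tensor, since it is the difference of two connections. Then the Palatini-type identity $\delta R_{ij} = \nabla_k(\delta\Gamma^k_{ij}) - \nabla_i(\delta\Gamma^k_{kj})$ follows by linearizing $R_{ij} = \partial_k\Gamma^k_{ij} - \partial_i\Gamma^k_{kj} + \Gamma\Gamma$ — most cleanly by evaluating at a point in geodesic normal coordinates centered there, where the undifferentiated $\Gamma$'s vanish, and then invoking tensoriality. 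Contracting with $g^{ij}$ and commuting $g^{ij}$ past the covariant derivatives shows that $g^{ij}\delta R_{ij} = \nabla_i X^i$ is a pure divergence, with $X^i = g^{jk}\delta\Gamma^i_{jk} - g^{ij}\delta\Gamma^k_{kj}$; explicitly this is $\nabla^i\nabla^j h_{ij} - \Delta_g(\operatorname{tr}_g h)$.

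Assembling the pieces gives
$$
\left.\tfrac{d}{dt}\right|_{t=0}\scal_{g+th} = -\langle\Ric_g,h\rangle + \nabla_i X^i,
$$
and therefore
$$
S'_h = \frac12\int_M\left[-\langle\Ric_g,h\rangle + \frac{\scal_g}{2}\langle g,h\rangle\right]\dvol_g + \frac12\int_M \nabla_i X^i\,\dvol_g.
$$
Since $M$ is compact without boundary, the divergence theorem kills the last integral, and rewriting $\tfrac{\scal_g}{2}\langle g,h\rangle = \langle \tfrac{\scal_g}{2}g, h\rangle$ yields the claimed formula. The main obstacle is the bookkeeping in the scalar curvature variation — in particular keeping the divergence terms visibly in divergence form so that their integral vanishes cleanly; using normal coordinates at a point to derive the Palatini identity, and only afterwards reinstating covariant derivatives by tensoriality, is what keeps this manageable.
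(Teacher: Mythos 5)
Your proposal is correct and is the standard argument: the paper itself gives no proof of this theorem but refers to Besse \cite[Chapter 4C]{Bes87}, where essentially the same computation (variation of the volume form, variation of the inverse metric, and the Palatini identity making $g^{ij}\delta R_{ij}$ a divergence that integrates to zero on the closed manifold $M$) is carried out. Your signs and the final assembly are right, so nothing needs to be changed.
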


\begin{cor}
Let $\dim M \ge 3$.\nopagebreak
\begin{enumerate}
\item[a)]
A metric $g \in \Met_M$ is a critical point of $S$ if and only if $g$ is Ricci-flat, i.~e. $\Ric_g = 0$.
\item[b)]
Critical points of the restriction of $S$ to the space $\Met_M^1$ of metrics of unit total volume are Einstein metrics, i.~e. metrics with $\Ric_g = \lambda g$.
\end{enumerate}
\end{cor}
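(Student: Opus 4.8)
The plan is to derive both statements directly from the first‑variation formula of the preceding Theorem, treating (a) as the unconstrained Euler--Lagrange equation and (b) via Lagrange multipliers for the volume constraint.

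For (a), a metric $g$ is by definition a critical point of $S$ precisely when $S'_h = 0$ for every field $h$ of symmetric bilinear forms on $M$. By the Theorem this says
\[
\int_M \left\langle \tfrac{\scal_g}{2}\, g - \Ric_g,\, h \right\rangle \dvol_g = 0 \quad \text{for all } h .
\]
I would then invoke the pointwise fundamental lemma of the calculus of variations: if a continuous symmetric $2$-tensor $A$ satisfies $\int_M \langle A, h\rangle\,\dvol_g = 0$ for all smooth symmetric $h$, then $A \equiv 0$ (choose, in a chart near a point $p$, a tensor $h$ of the form $\varphi\cdot A(p)$ with $\varphi$ a bump function and let the support shrink; since $\langle A(p),A(p)\rangle>0$ unless $A(p)=0$, this forces $A(p)=0$). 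Applying it to $A = \tfrac{\scal_g}{2}\, g - \Ric_g$ yields $\Ric_g = \tfrac{\scal_g}{2}\, g$. Taking the $g$-trace and using $\operatorname{tr}_g g = n := \dim M$ and $\operatorname{tr}_g \Ric_g = \scal_g$ gives $\scal_g = \tfrac n2\,\scal_g$, i.e. $(1-\tfrac n2)\scal_g = 0$; since $n\ge 3$ we get $\scal_g = 0$ and hence $\Ric_g = 0$. The converse is immediate from the same formula.

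For (b), I would first compute the first variation of the volume functional $\vol(g) = \int_M \dvol_g$. In local coordinates $\dvol_g = \sqrt{\det(g_{ij})}\,dx$, so by Jacobi's formula $\left.\frac{d}{dt}\right|_{t=0}\dvol_{g+th} = \tfrac12\operatorname{tr}_g(h)\,\dvol_g = \tfrac12\langle g,h\rangle\,\dvol_g$, whence
\[
\left.\frac{d}{dt}\right|_{t=0}\vol(g+th) = \frac12 \int_M \langle g, h\rangle \dvol_g .
\]
Since the right-hand side is nonzero (take $h=g$), $\Met_M^1 = \{\vol = 1\}$ is a smooth submanifold and $g$ is a critical point of $S|_{\Met_M^1}$ iff there is a Lagrange multiplier $\mu\in\R$ with $S'_h = \mu\cdot\tfrac12\int_M\langle g,h\rangle\,\dvol_g$ for all $h$. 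The factors $\tfrac12$ cancel, and the localization lemma now gives $\tfrac{\scal_g}{2} g - \Ric_g = \mu g$, i.e. $\Ric_g = \big(\tfrac{\scal_g}{2} - \mu\big) g =: \lambda\, g$. Taking the $g$-trace gives $\scal_g = n\lambda$; substituting $\lambda = \tfrac{\scal_g}{2} - \mu = \tfrac{n\lambda}{2} - \mu$ yields $\lambda = \tfrac{2\mu}{n-2}$, a constant (here $n\ge 3$). Hence $g$ is Einstein.

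The only slightly delicate points are the localization lemma — which must be used in its pointwise, not distributional, form, but is standard for tensor fields — and the infinite-dimensional Lagrange multiplier step, legitimate precisely because the differential of $\vol$ is everywhere nonzero, so that $\Met_M^1$ is a genuine submanifold and the constrained critical-point condition takes the stated form. Beyond this there is no analytic obstacle: $S$ and $\vol$ are smooth in $g$, and every computation reduces to pointwise tensor algebra together with Jacobi's determinant formula.
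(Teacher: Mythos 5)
Your proof is correct and is essentially the argument the paper has in mind: the paper itself gives no proof of this corollary, deducing it from the first-variation theorem stated just before and referring to \cite[Chapter 4C]{Bes87}, where the standard derivation is exactly yours (localize the vanishing of $S'_h$ to get $\tfrac{\scal_g}{2}g-\Ric_g=0$, respectively $=\mu g$ via the volume constraint, then take the $g$-trace and use $\dim M\ge 3$). Your handling of the two delicate points --- the pointwise fundamental lemma for tensor fields and the Lagrange-multiplier step, which can equivalently be phrased by splitting an arbitrary $h$ into a part tangent to $\{\vol=1\}$ plus a multiple of $g$ --- is sound, so nothing further is needed.
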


If $\dim M = 3$, then Einstein metrics are metrics of constant sectional curvature (Euclidean, hyperbolic or spherical).

See \cite[Chapter 4C]{Bes87} for details.

\subsection{Discrete case}
\label{sec:DiscrDef}
Let $M$ be a compact $3$-manifold without boundary.
Fix a triangulation $T$ of $M$ and pick a map
$$
\ell \colon \cE(T) \to (0, +\infty), \quad e \mapsto \ell_e
$$
assigning to every edge $e$ of $T$ a length $\ell_e$. Consider only those $\ell$ for which every tetrahedron of $T$ can be realized as a Euclidean tetrahedron with the edge lengths $\ell$. (This set is non-empty, since $\ell_e = 1$ for all $e$ will do.)

The map $\ell$ introduces a Euclidean metric on each tetrahedron of $T$, and a \emph{Euclidean cone-metric} on $M$. Note that different pairs $(T,\ell)$ can define the same metric; for example, we may subdivide the triangulation $T$ and define lengths of new edges appropiately.

The \emph{Hilbert-Einstein functional} on the space of Euclidean cone-metrics is
$$
S(T,\ell) = \sum_{e \in \cE(T)} \ell_e (2\pi - \omega_i),
$$
where $\omega_e$ is the total angle around $e$, see Figure \ref{fig:ConeAngle}. Clearly, the value of $S$ depends only on the metric, not on the choice of the representative $(T,\ell)$.

\begin{figure}[ht]
\centering
\begin{picture}(0,0)%
\includegraphics{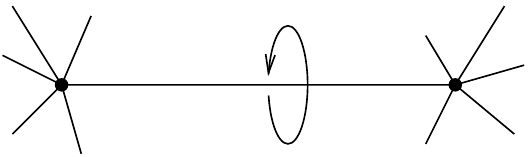}%
\end{picture}%
\setlength{\unitlength}{4144sp}%
\begingroup\makeatletter\ifx\SetFigFont\undefined%
\gdef\SetFigFont#1#2#3#4#5{%
  \reset@font\fontsize{#1}{#2pt}%
  \fontfamily{#3}\fontseries{#4}\fontshape{#5}%
  \selectfont}%
\fi\endgroup%
\begin{picture}(2409,699)(1069,-838)
\put(2487,-316){\makebox(0,0)[lb]{\smash{{\SetFigFont{12}{14.4}{\rmdefault}{\mddefault}{\updefault}{\color[rgb]{0,0,0}$\omega_e$}%
}}}}
\put(1781,-694){\makebox(0,0)[lb]{\smash{{\SetFigFont{12}{14.4}{\rmdefault}{\mddefault}{\updefault}{\color[rgb]{0,0,0}$\ell_e$}%
}}}}
\end{picture}%

\caption{Lengths and angles in a $3$-dimensional cone-manifold.}
\label{fig:ConeAngle}
\end{figure}

\begin{rem}
\label{rem:Approx}
If $\dim M = n$, then Euclidean cone-metrics on $M$ have cone singularities around codimension $2$ faces of $T$, and one puts
$$
S(T,\ell) = c_n \sum_{\dim F = n-2} \vol_{n-2}(F) (2\pi - \omega_F)
$$
Cheeger, M\"uller, and Schrader \cite{CMS84} have shown that the discrete Hilbert-Einstein functional converges to the smooth one if a sequence $(T^{(n)}, \ell^{(n)})$ of Euclidean cone-metrics converges to a Riemannian metric $g$ (with respect to the Lipschitz distance between metric spaces) so that all simplices in $(T^{(n)},\ell^{(n)})$ stay sufficiently fat.

It is an open problem to what most general class of metric spaces (including Riemannian manifolds and Euclidean cone-manifolds) the Hilbert-Einstein functional, and more generally, all total Lipschitz-Killing curvatures can be extended.
\end{rem}

The Hilbert-Einstein functional can also be defined for hyperbolic and spherical cone-metrics. In this case an additional volume term appears, see \cite[Sections 4.2-4.4]{Izm13+a}.

\subsection{Critical points in the discrete case}
Call the quantity $\kappa_e := 2\pi - \omega_e$ the \emph{curvature} of a Euclidean cone-metric at the edge $e$. Then we have $S(T,\ell) = \sum_e \ell_e \kappa_e$.

\begin{thm}
\label{thm:DiscrCritPt}
We have $\frac{\partial S}{\partial \ell_e} = \kappa_e$
\end{thm}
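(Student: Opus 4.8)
The plan is to regard $S(T,\ell) = \sum_{e \in \cE(T)} \ell_e \kappa_e$ as a smooth function of $\ell$ on the open subset of $\R^{\cE(T)}$ consisting of those $\ell$ for which every tetrahedron of $T$ is realizable, and to differentiate this formula directly. Since $\kappa_e = 2\pi - \omega_e$, the Leibniz rule gives, for a fixed edge $e_0$,
$$
\frac{\partial S}{\partial \ell_{e_0}} = \kappa_{e_0} + \sum_{e \in \cE(T)} \ell_e \frac{\partial \kappa_e}{\partial \ell_{e_0}} = \kappa_{e_0} - \sum_{e \in \cE(T)} \ell_e \frac{\partial \omega_e}{\partial \ell_{e_0}},
$$
so the whole statement reduces to showing that the correction term $\sum_e \ell_e\, \partial\omega_e/\partial\ell_{e_0}$ vanishes.

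The second step is a purely combinatorial reduction to a single simplex. Write $\omega_e = \sum_{\sigma \supset e} \theta^\sigma_e$, the sum being over the tetrahedra $\sigma$ of $T$ that contain $e$, with $\theta^\sigma_e$ the dihedral angle of $\sigma$ along $e$. Each $\theta^\sigma_e$ is an isometry invariant of $\sigma$, hence a function of its six edge lengths only, so it is independent of $\ell_{e_0}$ whenever $e_0 \not\subset \sigma$. Interchanging the order of summation,
$$
\sum_{e \in \cE(T)} \ell_e \frac{\partial \omega_e}{\partial \ell_{e_0}} = \sum_{\sigma \supset e_0} \left( \sum_{e \subset \sigma} \ell_e \frac{\partial \theta^\sigma_e}{\partial \ell_{e_0}} \right),
$$
and it suffices to prove that each inner sum vanishes.

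The third step, which carries all the geometric content, is the \emph{Schläfli differential identity} for a single Euclidean tetrahedron $\sigma$: as a $1$-form on the space of edge lengths of $\sigma$,
$$
\sum_{e \subset \sigma} \ell_e \, d\theta^\sigma_e = 0.
$$
Pairing this identity with the coordinate vector field $\partial/\partial\ell_{e_0}$ annihilates each inner sum above, and combining with the first step yields $\partial S / \partial \ell_{e_0} = \kappa_{e_0}$, as claimed.

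I expect this last identity to be the main obstacle: the first two steps are formal, but the cancellation inside a tetrahedron is genuinely geometric. I would establish it by one of the classical routes. One option is a direct computation from an explicit parametrization, e.g.\ via the Gram matrix of the outward unit normals $u_i$ to the four faces, whose off-diagonal entries are $-\cos\theta^\sigma_e$, together with the Minkowski relation $\sum_i \vol_2(F_i)\, u_i = 0$ among the face areas. Another is to obtain the Euclidean identity as the flat limit of the spherical (or hyperbolic) Schläfli formula, in which the analogous sum equals a constant multiple of the differential of the volume, a term that degenerates to $0$ as the curvature tends to $0$. A third is induction on the dimension, the two-dimensional base case being just $d(\beta_1 + \beta_2 + \beta_3) = 0$ for the angles of a Euclidean triangle. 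Any one of these supplies the needed identity, after which the theorem is immediate.
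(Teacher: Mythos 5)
Your argument is correct and is essentially the paper's own proof: the paper reduces the statement to the identity $\sum_e \ell_e\, d\kappa_e = 0$, obtained by summing the Schl\"afli formula over all tetrahedra of $T$, which is exactly your Leibniz-plus-interchange-of-summation computation. The only difference is that you also sketch proofs of the Schl\"afli identity itself, which the paper treats as known (noting that Regge's independent argument gives an elementary proof of it).
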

This is equivalent to the identity $\sum_e \ell_e d\kappa_e = 0$, which follows by adding up the Schl\"afli formula for all tetrahedra of $T$. An independent proof was given by the physicist Tullio Regge who introduced the discrete Hilbert-Einstein functional in \cite{Reg61}. In particular, Regge's argument provides an elementary proof of the Schl\"afli formula.

\begin{cor}
\label{cor:CritPts}
Critical points of the discrete Hilbert-Einstein functional represent flat metrics.
\end{cor}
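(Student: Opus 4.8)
The plan is to deduce this directly from Theorem~\ref{thm:DiscrCritPt}. Fix a Euclidean cone-metric on $M$ together with a pair $(T,\ell)$ representing it. Since each tetrahedron of $T$ is realized by a \emph{non-degenerate} Euclidean tetrahedron, the set
$$
L_T=\{\ell'\colon\cE(T)\to(0,+\infty)\mid\text{every tetrahedron of }T\text{ is realizable with edge lengths }\ell'\}
$$
is an open subset of $\R^{\cE(T)}$ containing $\ell$, and on it the dihedral angles $\omega_e$, hence the curvatures $\kappa_e=2\pi-\omega_e$, are real-analytic functions of $\ell'$ (the dihedral angles of a non-degenerate Euclidean simplex depend analytically on its edge lengths). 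Thus $S(T,\cdot)$ is a smooth function on $L_T$, and by Theorem~\ref{thm:DiscrCritPt} its differential is $dS=\sum_{e\in\cE(T)}\kappa_e\,d\ell_e$. Since the $d\ell_e$ are independent coordinates, the metric is a critical point of $S$ if and only if $\kappa_e=0$ for every edge $e$, that is, $\omega_e=2\pi$ around every edge.

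It remains to see that $\omega_e\equiv 2\pi$ forces the cone-metric to be an honest (flat) Riemannian metric. The metric defined by $(T,\ell)$ is automatically locally Euclidean in the interior of each tetrahedron and across each triangle; along the interior of an edge $e$ it is locally isometric to $\R$ times a two-dimensional Euclidean cone of total angle $\omega_e$, hence smooth there precisely when $\omega_e=2\pi$. Finally, the link $L_v$ of a vertex $v$ is a spherical cone-surface homeomorphic to $S^2$ whose cone points correspond to the edges $e$ through $v$, the cone angle at the point of $e$ being exactly $\omega_e$ (the angle of the vertex-link triangle of a tetrahedron at such a point equals the dihedral angle of that tetrahedron at $e$). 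So if all $\omega_e=2\pi$, then $L_v$ is a non-singular, simply connected spherical surface, i.~e.\ the round sphere, and a neighbourhood of $v$ is isometric to a Euclidean ball. Hence the cone-metric is flat. The converse is trivial: for a flat metric $\kappa_e\equiv 0$, so $dS=0$.

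The one point I would take care with is the meaning of ``critical point''. The argument above shows precisely that a point of $L_T$ is critical for $S(T,\cdot)$ if and only if the metric it represents is flat, and this is what is needed. That the choice of representative $(T,\ell)$ is irrelevant follows from the fact, recorded in the excerpt, that $S$ depends only on the metric: the coordinates $\ell'$ form a local chart on the space of Euclidean cone-metrics near the given one, so criticality is chart-independent, and a metric that is critical with respect to any larger family of admissible deformations (for instance also allowing the triangulation to change) is in particular critical within $L_T$, hence flat. I expect this bookkeeping about definitions, together with the vertex-link argument in the second paragraph, to be the only genuinely non-formal ingredients; everything else is immediate from Theorem~\ref{thm:DiscrCritPt}. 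Note finally that this is the discrete counterpart of part~a) of the Corollary in the smooth case (unconstrained critical points are Ricci-flat); imposing a volume constraint would instead single out the discrete analogue of Einstein metrics.
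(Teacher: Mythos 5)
Your argument is correct and follows the same route the paper intends: the corollary is an immediate consequence of Theorem~\ref{thm:DiscrCritPt}, since $dS=\sum_e\kappa_e\,d\ell_e$ vanishes exactly when all edge curvatures $\kappa_e$ do. Your additional vertex-link argument (that $\omega_e=2\pi$ for all edges also rules out singularities at vertices, the link being a smooth spherical surface and hence the round sphere) is a correct filling-in of a detail the paper leaves implicit.
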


Similarly, critical points of the functional on the space of hyperbolic cone-metrics (see end of Section \ref{sec:DiscrDef}) correspond to hyperbolic metrics without cone singularities.

Corollary \ref{cor:CritPts} has two applications:
\begin{itemize}
\item
Construct a metric of constant curvature by finding a critical point of $S$ .
\item
Prove rigidity of a space-form by showing non-degeneracy of the corresponding critical point of $S$.
\end{itemize}

It is surely tempting to try to reprove hyperbolization theorem by showing the existence of critical points of $S$ under suitable topological assumptions on $M$. Two main difficulties arise here. One is that the functional is neither convex nor concave, which makes existence of a critical point difficult to prove. The other is the choice of a triangulation $T$, since we cannot know in advance the combinatorial type of a geodesic triangulation. One possible solution is to start with an arbitrary triangulation and change its combinatorial type while deforming the metric. This is what was done in our proof of the Alexandrov theorem (Section \ref{sec:AlexThm}) which can be viewed as a simple case of geometrization with boundary conditions. See also \cite{FI09,FI11} for hyperbolic metrics on cusps with boundary.

In the smooth case, Blaschke and Herglotz \cite{BH37} suggested to use the variational property of the smooth Hilbert-Einstein functional for solving Weyl's problem. Yamabe's work \cite{Yam60} resulted from an attempt to solve Poincar\'e's conjecture using the same variational principle. Most recently, a geometrization program developing Yamabe's ideas was proposed by M.~Anderson \cite{And02,And03}.

The second of the above points, the infinitesimal rigidity, is more easily tractable. Variational properties of the Hilbert-Einstein functional form the basis of Koiso's proof of the infinitesimal rigidity of Einstein manifolds under certain assumptions on the eigenvalues of the curvature tensor, \cite{Koi78}. We used similar ideas in a new proof of the infinitesimal rigidity of convex polyhedra (Section \ref{sec:InfRigConv}) and of a class of non-convex polyhedra (Section \ref{sec:InfRigNon}).

\section{Infinitesimal rigidity of convex polyhedra}
\label{sec:InfRigConv}
\subsection{The boundary term of the Hilbert-Einstein functional}
If the compact manifold $M$ has a non-empty boundary, then the Hilbert-Einstein functional needs a boundary term, in order to remain differentiable. In the smooth case, this is
$$
S(g) = \frac12 \int_M \scal_g \dvol_g + \int_{\partial M} H \dvol^\partial_g,
$$
where $H$ is the trace of the second fundamental form $II$. The variational formula becomes
$$
S'_h = \frac12 \int_M \left\langle \frac{\scal_g}{2} g - \Ric_g, h \right\rangle \dvol_g + \frac12 \int_{\partial M} \left\langle Hg - II, h \right\rangle \dvol^\partial_g
$$

In the discrete case we have
$$
S(T,\ell) = \sum_{e \in \cE_i(T)} \ell_e (2\pi - \omega_i) + \sum_{e \in \cE_\partial(T)} \ell_e (\pi - \theta_e),
$$
where $\cE_i(T)$ and $\cE_\partial(T)$ are the sets of interior and boundary edges of $T$, respectively, and $\theta_e$ is the dihedral angle at the boundary edge $e$. The variational formula is obtained again by adding up the Schl\"afli formulas for individual simplices:
$$
\frac{\partial S}{\partial \ell_e} =
\begin{cases}
2\pi - \omega_e, &\text{if }e \in \cE_i(T)\\
\pi - \theta_e, &\text{if }e \in \cE_\partial(T)
\end{cases}
$$

\begin{rem}
If $M \subset \R^3$ is a convex body, then both of the above boundary terms appear as the coefficients at the $t^2$ term in the Steiner formula for $M$. Another common interpretation of both is $4\pi$ times the mean width (average length of projections to lines) of $M$. Check ball and cube.
\end{rem}

If we keep the metric on the boundary fixed (that is $h(X,Y) = 0$ for $X, Y \in T\partial M$ in the smooth case, and $\ell_e = \const$ for $e \in \cE_\partial(T)$ in the discrete case), then the critical points of the functionals are metrics that are flat inside $M$ and restrict to the given metric on the boundary.

\subsection{A proof of the infinitesimal rigidity of a convex polyhedron}
\label{sec:InfRigProof}
Let $P \subset \R^3$ be a compact convex polyhedron. For simplicity, assume that all faces of $P$ are triangles. An \emph{infinitesimal isometric deformation} of $P$ is an assignment of a vector $q_i$ to every vertex $p_i$ such that
$$
\langle p_i - p_j, q_i - q_j \rangle = 0 \quad \text{ for every edge }p_ip_j
$$
which is equivalent to $\left.\frac{\partial}{\partial t}\right|_{t=0} \|p_i(t) - p_j(t)\| = 0$ with $p_i(t) = p_i + tq_i$. A polyhedron is called \emph{infinitesimally rigid} if every its infinitesimal isometric deformation extends to an infinitesimal isometry of $\R^3$.

We will take another viewpoint: instead of deforming an embedded surface (the boundary of the polyhedron) we deform the metric inside the polyhedron itself. For this, choose a point $a$ inside $P$ and subdivide $P$ into triangular pyramids with apex $a$ and faces of $P$ as bases. This results in a triangulation $T$ of $P$. Denote by
$$
r_i := \|a - p_i\|, \quad \ell_{ij} := \|p_i - p_j\|
$$
the lengths of interior and boundary edges, respectively.
We will change $r_i$ while keeping $\ell_{ij}$ fixed and look what happens with the curvatures $\kappa_i$ around the interior edges (at the beginning we have $\kappa_i = 0$). See Figure \ref{fig:WarpPoly}.

\begin{figure}[ht]
\centering
\begin{picture}(0,0)%
\includegraphics{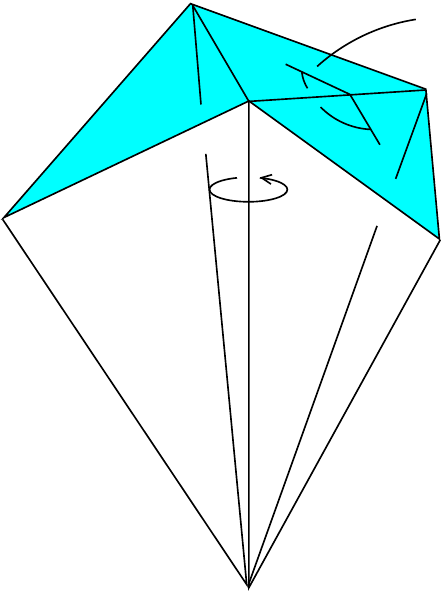}%
\end{picture}%
\setlength{\unitlength}{4144sp}%
\begingroup\makeatletter\ifx\SetFigFont\undefined%
\gdef\SetFigFont#1#2#3#4#5{%
  \reset@font\fontsize{#1}{#2pt}%
  \fontfamily{#3}\fontseries{#4}\fontshape{#5}%
  \selectfont}%
\fi\endgroup%
\begin{picture}(2025,2694)(-226,-1693)
\put(1745,815){\makebox(0,0)[lb]{\smash{{\SetFigFont{12}{14.4}{\rmdefault}{\mddefault}{\updefault}{\color[rgb]{0,0,0}$\theta_{ij}$}%
}}}}
\put(1044,-61){\makebox(0,0)[lb]{\smash{{\SetFigFont{12}{14.4}{\rmdefault}{\mddefault}{\updefault}{\color[rgb]{0,0,0}$\omega_i$}%
}}}}
\put(976,-627){\makebox(0,0)[lb]{\smash{{\SetFigFont{12}{14.4}{\rmdefault}{\mddefault}{\updefault}{\color[rgb]{0,0,0}$r_i$}%
}}}}
\end{picture}%
\caption{Lengths and angles in the triangulation $T$ of the polyhedron $P$. Shaded triangles lie on the boundary; only a part of the triangulation shown.}
\label{fig:WarpPoly}
\end{figure}

\begin{dfn}
A deformation $(s_i)$ of the interior edge lengths $(r_i)$ is called \emph{curvature-preserving}, if all directional derivatives
$$
\frac{d \kappa_i}{ds} = \sum_i \frac{\partial \kappa_i}{\partial r_j} s_j
$$
vanish. In other words, if
$$
s \in \ker \left( \frac{\partial \kappa_i}{\partial r_j} \right) = \ker \left( \frac{\partial^2 S}{\partial r_i \partial r_j} \right)
$$
\end{dfn}

Among curvature-preserving deformations there are trivial ones that result from a displacement of the point $a$ inside $P$. It is easy to show that they form a $3$-dimensional subspace. Also it is easy to see that the infinitesimal rigidity of $P$ in the original sense is equivalent to the absence of non-trivial curvature-preserving deformations:
$$
P \text{ is infinitesimally rigid } \Leftrightarrow \dim \ker \left( \frac{\partial^2 S}{\partial r_i \partial r_j} \right) = 3
$$

The following theorem implies that convex polyhedra are infinitesimally rigid.
\begin{thm}
Let $P$ be a compact convex polyhedron with triangular faces. 
Then the second variation $\left( \frac{\partial^2 S}{\partial r_i \partial r_j} \right)$ for the star-like triangulation of $P$ described above has the signature $(+, 0, 0, 0, -, \ldots, -)$.
\end{thm}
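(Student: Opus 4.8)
The plan is to compute the matrix $H=\left(\frac{\partial^2 S}{\partial r_i\partial r_j}\right)$ at the flat configuration (where $P$ sits in $\R^3$ and all $\kappa_i=0$) and to display it as a quadratic form whose inertia can be read off; throughout, $n$ denotes the number of vertices of $P$, which is also the number of interior edges and hence the size of $H$. By Theorem~\ref{thm:DiscrCritPt} and its boundary version, $\frac{\partial S}{\partial r_i}=\kappa_i$, so $H_{ij}=\frac{\partial\kappa_i}{\partial r_j}$. Since the boundary lengths $\ell_{ij}$ are kept fixed one may write $S=2\pi\sum_i r_i+\const-\sum_\sigma F_\sigma$ with $F_\sigma=\sum_{e\in\sigma}\ell_e\theta_{e,\sigma}$, and then the Schl\"afli formula in the form $dF_\sigma=\sum_{e\in\sigma}\theta_{e,\sigma}\,d\ell_e$ yields $H=-\sum_\sigma\operatorname{Hess}_r F_\sigma$: a sum over the tetrahedra $\sigma=ap_ip_jp_k$ of explicit $3\times3$ blocks, each computable from the Euclidean geometry of $\sigma$ (for instance through the Gram matrix of its four face-normals). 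The first thing to record is the $3$-dimensional subspace $V_0$ of trivial deformations coming from displacements of $a$ (explicitly $s_i=-\langle v,u_i\rangle$ with $v\in\R^3$ and $u_i$ the unit vector from $a$ to $p_i$): these do not change the cone-metric, hence $V_0$ lies in the radical of $H$.

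The technical heart is to show that $H$ is negative definite on some $(n-4)$-dimensional subspace. This is the discrete counterpart of the step in Koiso's argument where the second variation of the Hilbert--Einstein functional is shown to be negative definite on transverse-traceless tensors, and I expect it to be the main obstacle. I would attack it from the per-tetrahedron formula together with convexity of $P$: convexity ought to force the off-diagonal entries $H_{ij}$ (for $i\ne j$ with $p_ip_j$ an edge of $P$) to have a fixed sign and the associated weighted graph on the $p_i$ to be connected (because $\partial P$ is), after which a Perron--Frobenius-type argument controls the number of non-negative eigenvalues; alternatively one may try to identify $H$, up to congruence and the trivial kernel, with the matrices appearing in Alexandrov's rigidity theorem for convex polytopes, whose signature is classical. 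A variant that is cleaner to state: the space of convex realizations of the given simplicial $2$-sphere (with a marked interior point) is connected, $H$ varies continuously on it, and its radical always contains $V_0$; if one can check the radical is never strictly larger than $V_0$ — which again rests on a non-degeneracy input of the same nature — then the inertia of $H$ is constant along deformations and may be evaluated on one convenient symmetric example.

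Alongside this I would exhibit at least one deformation $s^\ast$ with $H(s^\ast,s^\ast)>0$ — morally the volume-changing or ``conformal'' direction, for instance the uniform radial scaling $s^\ast_i=r_i$, which is not contained in $V_0$ (since the $u_i$ positively span $\R^3$) and produces a genuinely non-flat cone-metric. Here a short direct computation from the $3\times3$ blocks, or the observation that the flat convex configuration cannot be a local maximum of $S$ modulo $V_0$ (using $S>0$ there and the behaviour of $S$ along the chosen deformation), should suffice.

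Finally I would assemble everything by Sylvester's law of inertia. We have $V_0\subseteq\operatorname{rad}H$ with $\dim V_0=3$; $H$ is negative definite on an $(n-4)$-dimensional subspace; and $H$ is positive on at least one vector. Writing $\mu$, $p$, $r$ for the numbers of negative, positive, and zero eigenvalues, one has $\mu+p+r=n$, $\mu\ge n-4$, $p\ge1$, $r\ge3$, which forces $r=3$, $p=1$, $\mu=n-4$; that is, $\operatorname{rad}H=V_0$ and $H$ has signature $(+,0,0,0,-,\dots,-)$. By the equivalence recorded above ($P$ infinitesimally rigid $\Leftrightarrow\dim\ker H=3$), the infinitesimal rigidity of $P$ follows.
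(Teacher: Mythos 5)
Your setup (the Schl\"afli-based expression for the Hessian, the $3$-dimensional trivial subspace $V_0$ coming from displacements of $a$, and the final Sylvester bookkeeping) is sound, but the two inputs the bookkeeping needs --- negative definiteness on an $(n-4)$-dimensional subspace and the existence of one positive direction --- are exactly the content of the theorem, and neither is actually proved; they are only flagged as things that ``ought to'' or ``should'' work. Each of the candidate strategies you name has a concrete defect. A sign pattern of the off-diagonal entries $\partial\kappa_i/\partial r_j$ together with connectedness and Perron--Frobenius can at best make one extreme eigenvalue simple; it gives no control of the corank, and the statement that the corank is exactly $3$ is equivalent to the infinitesimal rigidity of $P$ --- the very theorem this result is meant to furnish --- so ``identify $H$ with the matrices appearing in Alexandrov's rigidity theorem'' is both unsubstantiated and circular for the intended application. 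The continuity-of-inertia variant presupposes that the radical never exceeds $V_0$ along the whole deformation, i.e.\ again the non-degeneracy being proved. And for the positive direction, ``the flat configuration cannot be a local maximum of $S$ modulo $V_0$ because $S>0$ there'' is a non sequitur (a critical point with positive critical value can perfectly well be a local maximum), and no computation of $H(s^\ast,s^\ast)$ for $s^\ast_i=r_i$ is offered.

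For comparison, the argument the paper relies on handles precisely these two points by substantive, non-formal means: the statement about the rank (corank exactly $3$) is proved in \cite[Section 3]{Izm13+a} by a direct analysis of the matrix $\left(\frac{\partial\kappa_i}{\partial r_j}\right)$, and the fact that the positive index equals $1$ comes from identifying the second variation of $S$ with the second variation of the volume of the polar dual polytope \cite[Section 4.1]{Izm13+a} and invoking the second Minkowski inequality for mixed volumes \cite[Appendix]{Izm10}. Until you supply arguments of comparable substance for those two steps, your text is a plan for a proof rather than a proof.
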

The part about the rank of the second variation is proved in \cite[Section 3]{Izm13+a}. The fact that the positive index is equal to 1 follows from the coincidence of the second variations of $S$ and of the volume of polar dual \cite[Section 4.1]{Izm13+a}, and from the signature of the second variation of the volume, provided by the second Minkowski inequality for mixed volumes, \cite[Appendix]{Izm10}.

\section{Alexandrov's theorem}
\label{sec:AlexThm}
Alexandrov's theorem \cite{Ale42} states the existence and uniqueness of a compact convex polyhedron in $\R^3$ with a prescribed boundary metric. The intrinsic boundary metric is a Euclidean cone-metric (since the surface of a polyhedron can be glued from triangles) with singular points of positive curvature (vertices of the polyhedron). Note that the intrinsic metric does not detect the edges of a polyhedron.

\begin{thm}[A.~D. Alexandrov, \cite{Ale42}]
Let $g$ be a Euclidean cone-metric on the sphere with singular points of positive curvature. Then there exists a unique up to congurence compact convex polyhedron in $\R^3$ with $g$ as the intrinsic metric on the boundary. (The polyhedron may also be a polygon, in which case instead of the intrinsic metric on the boundary two copies of the polygon glued along pairs of corresponding edges are taken.)
\end{thm}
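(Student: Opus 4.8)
The plan is to realise the polyhedron as a critical point of the discrete Hilbert--Einstein functional, in the spirit of Section~\ref{sec:InfRigConv}. Let $g$ be a Euclidean cone-metric on $\Sph^2$ with cone points $p_1,\dots,p_n$ of positive curvature. Choose an interior ``apex'' $a$ and cone off $g$: for a vector $r=(r_1,\dots,r_n)$ of prospective lengths $|ap_i|$ and a triangulation $T$ of $\Sph^2$ with vertex set $\{p_i\}$ and edge lengths those of $g$, glue the Euclidean tetrahedra with apex $a$ over the triangles of $T$ into a Euclidean cone-metric on the ball $B^3$ --- a \emph{generalized convex polyhedron} --- with boundary $(\Sph^2,g)$, cone singularities only along the interior edges $ap_i$ (of curvature $\kappa_i=2\pi-\omega_i$), and convex along its boundary. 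Let $\mathcal R\subset\R_{>0}^n$ be the open set of admissible $r$. A genuine compact convex polyhedron with boundary $g$ is exactly such an object with $\kappa_i=0$ for every $i$: the cone-metric on $B^3$ is then flat and develops into $\R^3$ as a convex polyhedron, the doubled-polygon case of the statement being the one where this flat metric is that of a doubly covered convex polygon. As the boundary lengths $\ell_{ij}$, hence the boundary term of $S$, are fixed by $g$, the variational formula of Section~\ref{sec:InfRigConv} reads $\partial S/\partial r_i=\kappa_i$; thus the polyhedra we seek are precisely the critical points of $S\colon\mathcal R\to\R$. Moreover $S$ is $C^1$ throughout $\mathcal R$, even across the combinatorial flips of the optimal (weighted Delaunay) triangulation, because it depends only on the cone-metric (Section~\ref{sec:DiscrDef}).

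By the second-variation theorem of Section~\ref{sec:InfRigConv}, at a flat configuration the Hessian $(\partial^2 S/\partial r_i\partial r_j)=(\partial\kappa_i/\partial r_j)$ has signature $(+,0,0,0,-,\dots,-)$: its three-dimensional kernel consists of the apex displacements, and --- in the polar-dual picture of that section --- its single positive direction is a homothety of the polar dual $P^*$. I would use this to produce the critical point directly. Modulo the $3$-dimensional family of apex displacements and one further normalization transverse to homotheties --- conveniently a level set of $\vol(P^*)$, whose Hessian agrees with that of $S$ --- the functional becomes strictly concave on the resulting lower-dimensional slice $\Sigma\subset\mathcal R$; this global concavity is, in the dual picture, the content of the second Minkowski inequality. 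It remains to show that $S|_\Sigma$ is proper, i.e.\ tends to $-\infty$ as $r$ approaches $\partial\mathcal R$, where either a tetrahedron $ap_ip_jp_k$ degenerates or a boundary dihedral or solid angle reaches the value that destroys convexity. Granting properness, $S|_\Sigma$ attains its maximum at a unique interior point of $\Sigma$, which is the unique critical point of $S$ up to apex displacements; equivalently, $g$ is realised by a compact convex polyhedron, unique up to congruence.

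The main obstacle, I expect, is the global control of $\mathcal R$ and of $S$ on it: showing $\mathcal R\neq\emptyset$ in general (via a weighted Delaunay construction), pinning down the normalization that makes $S$ concave and the precise dictionary with polar duals, and above all establishing the properness at $\partial\mathcal R$ --- for which one must analyse the degeneration of Euclidean tetrahedra and the collapse of a generalized convex polyhedron towards a doubly covered polygon. The combinatorial flips of the weighted Delaunay triangulation must also be carried along, though $C^1$-smoothness of $S$ across them is already granted by metric invariance. By contrast the differential-geometric core is in hand: $\partial S/\partial r_i=\kappa_i$ comes from the Schl\"afli formula (see Theorem~\ref{thm:DiscrCritPt}), and the signature of the second variation is reduced in Section~\ref{sec:InfRigConv} to the second Minkowski inequality. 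A leaner alternative, closer to Alexandrov's original scheme, is to combine the infinitesimal rigidity of Section~\ref{sec:InfRigConv} with a continuity (invariance-of-domain) argument over the connected space of positive-curvature cone-metrics on $\Sph^2$ --- the realization map being an open, proper local homeomorphism --- with uniqueness supplied by the Cauchy--Alexandrov rigidity.
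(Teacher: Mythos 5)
Your setup coincides with the paper's (warped polyhedra over $(\Sph^2,g)$ with apex $a$, the identity $\partial S/\partial r_i=\kappa_i$ with the boundary term frozen, and the observation that critical points are exactly the flat configurations, i.e.\ Alexandrov's polyhedra, including doubly covered polygons). But the existence mechanism you propose has a genuine gap: the signature $(+,0,0,0,-,\dots,-)$ of $\left(\partial^2 S/\partial r_i\partial r_j\right)$ is established only \emph{at} flat configurations, where the warped polyhedron is an actual convex polyhedron and a polar dual exists; away from critical points there is no polar dual, the identification of the Hessian of $S$ with the Hessian of $\vol(P^*)$ is unavailable, and the second Minkowski inequality gives no information. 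Consequently the claimed strict concavity of $S$ on a slice $\Sigma$ of $\mathcal{R}$ is unsubstantiated --- indeed the paper stresses that the functional is neither convex nor concave, which is precisely why a direct maximization is not how any known proof proceeds. Your normalization by a level set of $\vol(P^*)$ is likewise only meaningful near critical points, and the properness assertion ($S\to-\infty$ at $\partial\mathcal{R}$) is not argued and is not obvious: tetrahedra can degenerate or boundary convexity can fail at bounded values of $S$.

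The proof surveyed in Section~\ref{sec:AlexThm} (from \cite{BI08}) avoids all of this by a continuation method rather than an extremal one: start from the Delaunay triangulation of $(\Sph^2,g)$ with all $r_i=R$ large, so that the warped polyhedron exists, is convex at the boundary, and has $\kappa_i>0$; then deform $r(t)$ so that $\kappa_i(t)=(1-t)\kappa_i(0)$, switching the triangulation through weighted Delaunay triangulations with weights $r_i^2$ to preserve $\theta_{ij}\le\pi$. The solvability of this deformation rests on the non-degeneracy of $\left(\partial\kappa_i/\partial r_j\right)$ for \emph{all} convex warped polyhedra satisfying the maintained sign and convexity conditions (not merely at critical points), which is the analytic core of \cite{BI08}, together with compactness arguments to reach $t=1$. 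That non-degeneracy statement is exactly what your concavity claim would need to replace, and it does not follow from the results quoted in Section~\ref{sec:InfRigConv}. Your closing alternative --- Alexandrov's continuity method via infinitesimal rigidity, invariance of domain, and Cauchy--Alexandrov uniqueness --- is a legitimate classical route, but it is only sketched, and its properness step (closedness of the image of the realization map) is a nontrivial piece you would still have to supply.
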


In \cite{BI08} a new proof of Alexandrov's theorem was given, similar in the spirit to the proof of the infinitesimal rigidity described in Section \ref{sec:InfRigProof}.

We start with a certain geodesic triangulation $\bar T(0)$ of the sphere equipped with metric $g$, with vertices at the singular points, and an assignment of a positive number $r_i(0)$ to every singular point $p_i$. This allows us to construct a Euclidean cone-manifold $P(\bar T(0), r(0))$ by gluing together triangular pyramids with radial edge lengths $r_i(0)$ and triangles of $\bar T_0$ as bases. Namely, we take the Delaunay triangulation of $(\Sph^2, g)$ as $\bar T(0)$, and put $r_i(0) = R$ for all $i$, with $R$ sufficiently large. This ensures that pyramids exist and that the ``warped polyhedron'' $P(\bar T(0), r(0))$ is convex at the boundary (i.~e. $\theta_{ij}(0) \le \pi$).

Then we proceed by deforming the lengths $r_i$, thus obtaining a continuous family of warped polyhedra $P(\bar T(t), r(t))$. The deformation is chosen so that
\begin{itemize}
\item $\kappa_i(t) = (1-t) \kappa_i(0)$, where $\kappa_i(t)$ is the curvature at the edge $ap_i$ in $P(\bar T(t), r(t))$;
\item the dihedral angles on the boundary remain $\le \pi$.
\end{itemize}

The second condition requires that at certain moments $t_1 < t_2 < \ldots$ the triangulation $\bar T(t)$ must be changed. The triangulation is determined uniquely (up to ``flat edges'', those where the dihedral angle is $\pi$) since the second condition is equivalent to $\bar T(t)$ being the weighted Delaunay triangulation of $(\Sph^2, g)$ with weights $r_i^2$, see \cite[Section 2.5]{BI08}.

The existence of a deformation satisfying the first condition follows from the non-degeneracy of the matrix $\left( \frac{\partial \kappa_i}{\partial r_j} \right)$ under certain assumptions \cite[Theorem 3.11--Proposition 3.16]{BI08}. In the limit as $t \to 1$ we have $\kappa_i \to 0$ for all $i$, thus $P(\bar T(1), r(1))$ is a compact convex polyhedron with a given metric on the boundary.

A corresponding numerical algorithm was implemented in a computer program by Stefan Sechelmann \cite{Sech}, see Figure \ref{fig:Comp}.

\begin{figure}[ht]
\centering
\includegraphics[width=.8\textwidth]{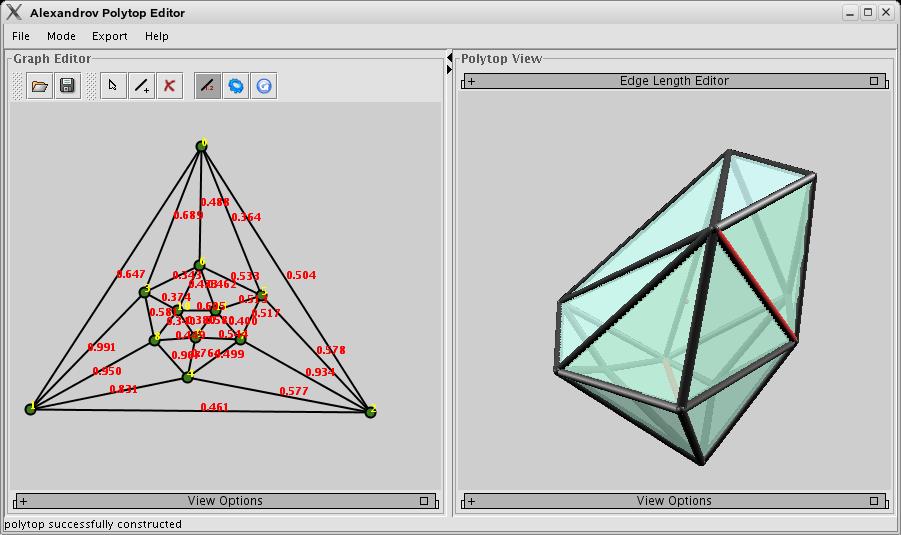}
\caption{A screenshot of \cite{Sech}.}
\label{fig:Comp}
\end{figure}

\section{Infinitesimal rigidity of weakly convex (co)decomposable polyhedra}
\label{sec:InfRigNon}
Infinitesimally flexible non-convex polyhedra exist, see Figure \ref{fig:SchJess}.
\begin{figure}[ht]
\centering
\includegraphics[height=.2\textheight]{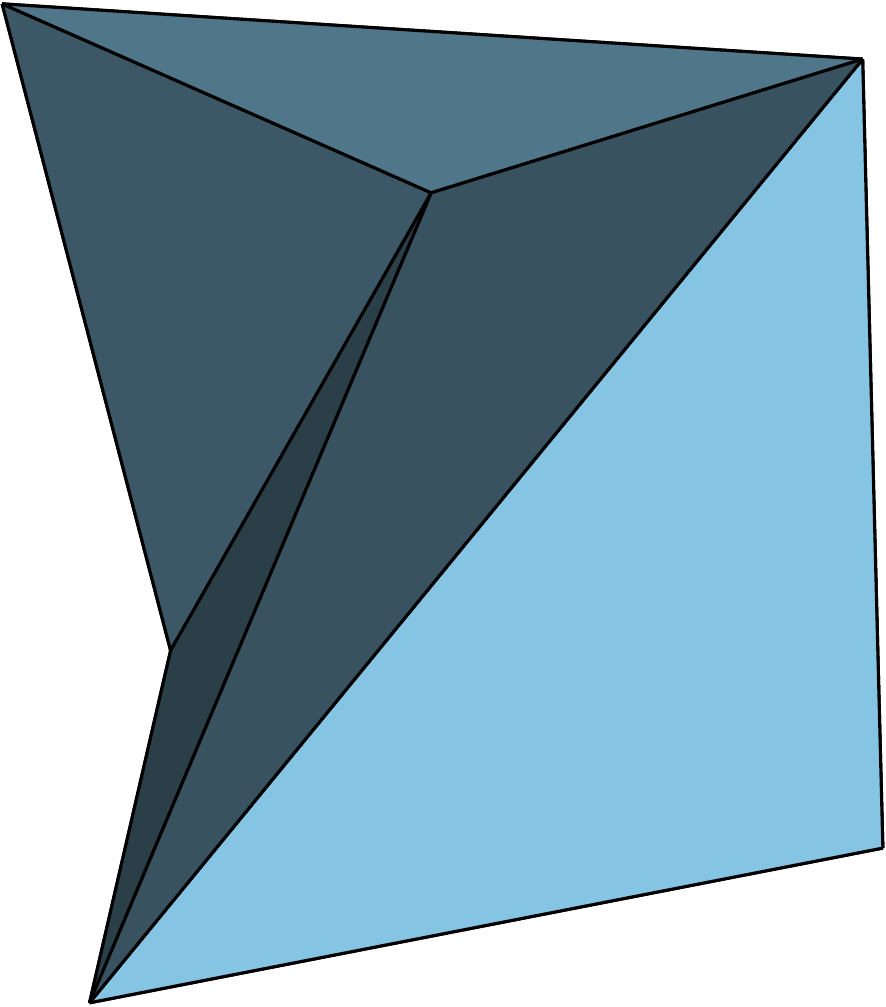} \hspace{.1\textwidth} \includegraphics[height=.2\textheight]{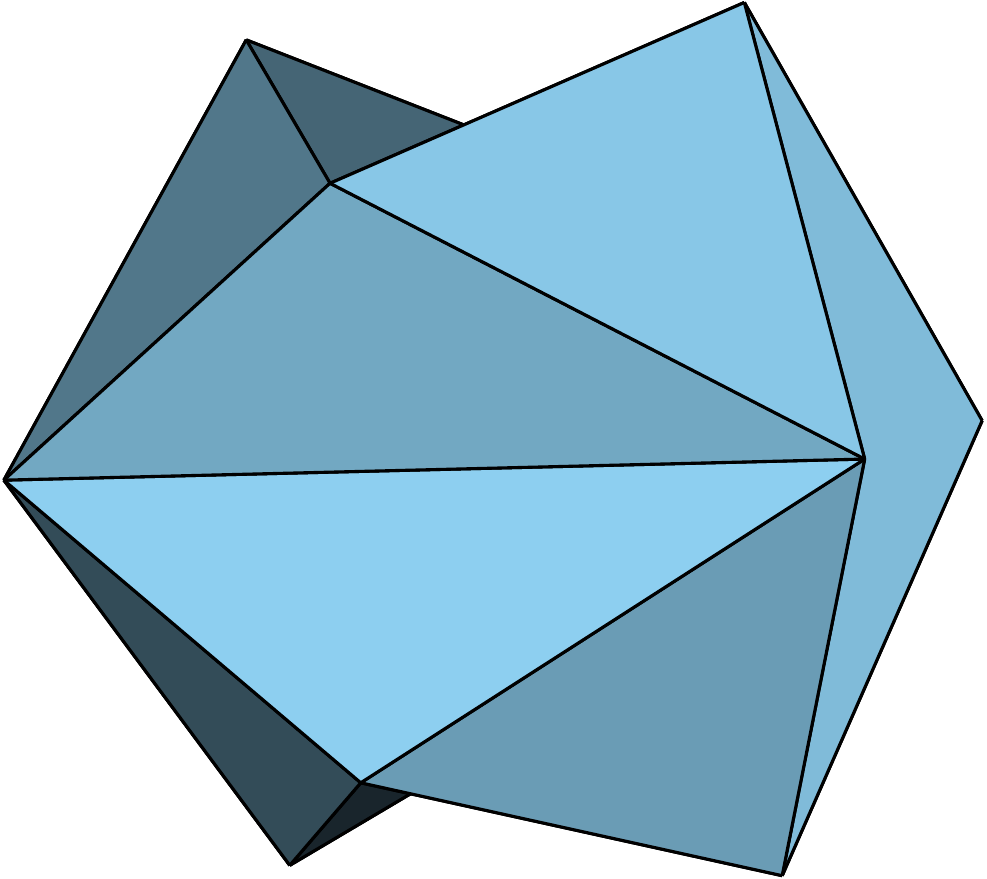}
\caption{Schoenhardt's twisted octahedron and Jessen's orthogonal icosahedron are infinitesimally flexible.}
\label{fig:SchJess}
\end{figure}

In \cite{IS10} the infinitesimal rigidity was proved for a wide class of non-convex polyhedra.

\begin{dfn}
A non-convex polyhedron is called \emph{weakly convex}, if its vertices lie in a convex position: $\Vert(P) = \Vert(\conv P)$.

A weakly convex polyhedron $P$ is called \emph{decomposable} if it can be triangulated without adding new vertices. It is called \emph{decomposable and codecomposable} if there is a triangulation $T$ of $\conv P$ such that $\Vert(T) = \Vert(P)$ and $P$ is a subcomplex of $T$.
\end{dfn}

Both polyhedra on Figure \ref{fig:SchJess} are weakly convex but not decomposable.

\begin{thm}[\cite{IS10}, Theorem 1.7]
\label{thm:WeakConv}
Weakly convex decomposable and codecomposable polyhedra are infinitesimally rigid.
\end{thm}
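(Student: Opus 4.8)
The plan is to reduce infinitesimal rigidity of $P$ to the non-degeneracy of a second variation of the discrete Hilbert--Einstein functional, in the spirit of Section~\ref{sec:InfRigProof}, and then to run an induction built on the decomposition of $\conv P$ into $P$ and its complement. Since $P$ is decomposable, fix a triangulation $T_P$ of $P$ with $\Vert(T_P)=\Vert(P)$; since $P$ is codecomposable, extend it to a triangulation $T$ of $\hat P:=\conv P$ with $\Vert(T)=\Vert(P)$ in which both $P$ and $Q:=\overline{\hat P\setminus P}$ are subcomplexes. By weak convexity all vertices of $T$ (hence of $T_P$ and $T_Q$) lie on $\partial\hat P$, so none of these triangulations has interior vertices. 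An infinitesimal isometric deformation of $P$ is the same as a first-order deformation of the flat cone-metric on the ball $P$, triangulated by $T_P$, which keeps the induced metric on $\partial P$ fixed and keeps all curvatures $\kappa_e$ ($e\in\cE_i(T_P)$) equal to zero; the trivial deformations correspond to the zero change of the interior edge lengths. By Theorem~\ref{thm:DiscrCritPt} the linearized equations $\delta\kappa_{e'}=0$ read $\sum_e(H_P)_{ee'}\,\dot\ell_e=0$ with $H_P:=\bigl(\partial^2 S_P/\partial\ell_e\partial\ell_{e'}\bigr)_{e,e'\in\cE_i(T_P)}$, so, exactly as in Section~\ref{sec:InfRigProof}, $P$ is infinitesimally rigid if and only if $H_P$ is non-degenerate; the analogous statement holds for $\hat P$ and $Q$ with their triangulations.

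The structural observation driving the induction is that the complement $Q=\overline{\hat P\setminus P}$ again belongs to the class of the theorem: its vertices are in convex position ($\Vert(Q)\subseteq\Vert(P)$), it is triangulated by $T_Q$ without new vertices, and it is codecomposed by $T$ with $Q$ as the distinguished subcomplex; moreover the number of tetrahedra of $T_Q$ is strictly smaller than the number of tetrahedra in a minimal codecomposing triangulation of $P$, because $T_P$ contributes at least one tetrahedron. This legitimizes an induction on the number of tetrahedra in a minimal codecomposing triangulation. The base case $Q=\emptyset$ is a convex polyhedron, for which the theorem of Section~\ref{sec:InfRigProof}, in its form for an arbitrary triangulation of a convex polytope, gives not merely the non-degeneracy of $H_{\hat P}$ but its signature $(+,-,\dots,-)$.

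For the inductive step I use the Schl\"afli identity: each entry $\partial^2 S/\partial\ell_e\partial\ell_{e'}$ is assembled from the tetrahedra that contain both $e$ and $e'$. Since every tetrahedron of $T$ lies in $T_P$ or in $T_Q$, splitting the interior edges of $T$ into those interior to $P$, those interior to $Q$, and those lying on $\partial P\cap\partial Q$, the Hessian $H_{\hat P}$ acquires a block form in which the ``$P$-interior'' and ``$Q$-interior'' blocks are decoupled from each other and equal to $H_P$ and $H_Q$, all the coupling being carried by the block indexed by the shared edges on $\partial P\cap\partial Q$. Feeding in the known signature of $H_{\hat P}$ together with the inductive data on $H_Q$, one deduces the non-degeneracy (indeed the signature) of $H_P$ by a signature computation for this block matrix; this is where convex position of the vertices of $Q$ reappears, forcing the sign of the shared-edge block because $Q$ is convex along $\partial P\cap\partial Q$ from its own side.

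I expect this last signature computation to be the main obstacle. A naive restriction--interlacing argument does not quite suffice, because the unique positive direction of the second variation is common to $P$, $Q$, and $\hat P$ --- it is the volume direction, equivalently the second variation of the volume of the polar dual --- so one first has to pass to the hyperplane of volume-preserving (equivalently, reflex-edge-preserving) deformations, on which the relevant form is negative definite, split it there, and then recombine the pieces. Making this bookkeeping precise, and checking that codecomposability together with weak convexity genuinely produces the required definiteness on the shared-edge block, is the technical heart of the proof.
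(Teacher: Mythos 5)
Your reduction of infinitesimal rigidity to the non-degeneracy of $H_P$ is fine, and so is the observation that, because $P$ is a subcomplex of $T$, the block of $H_{\hat P}$ indexed by the edges interior to $P$ is exactly $H_P$. The gap is in everything after that. The step you yourself flag as the ``technical heart'' --- the signature bookkeeping for the block decomposition --- is not carried out, and it rests on a false premise: you assume that the Hessian of $S$ for a triangulation of a convex polytope with all vertices on its boundary has one positive (``volume'') direction, i.e.\ signature $(+,-,\dots,-)$. By Theorem \ref{thm:SignPoly} with $i=b=0$ (this is the corollary stated right after it), such a Hessian is negative definite; the single positive eigenvalue appearing in Section \ref{sec:InfRigProof} is tied to the interior apex vertex (there $i=1$), not to a volume direction present for every triangulation. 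In addition, the induction on the complement $Q=\overline{\hat P\setminus P}$ is both unnecessary and delicate: $Q$ need not be a polyhedron of the class under consideration (it can be disconnected or fail to be a manifold with boundary), so the inductive hypothesis would require extra care even if the signature computation could be made to work.

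Once the correct signature statement is used, your block observation finishes the proof immediately, and this is exactly what the paper does: since $T$ uses only the vertices of $\conv P$ (weak convexity plus codecomposability), the Hessian of $S$ for $T$ is negative definite; $H_P$ is a principal submatrix of it, because the curvature at an edge interior to $P$ is computed from the tetrahedra of the subcomplex triangulating $P$ alone; hence $H_P$ is negative definite, in particular non-degenerate, so no non-trivial first-order change of the interior edge lengths preserves the curvatures, and $P$ is infinitesimally rigid. No induction, no passage to volume-preserving deformations, and no analysis of the shared-edge block is needed.
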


This theorem is a consequence of the following property of the Hilbert-Einstein functional.

\begin{thm}[\cite{IS10}, Theorem 1.17]
\label{thm:SignPoly}
Let $T$ be a triangulation of a convex polyhedron. Denote by $i$ the number of vertices of $T$ in the interior of the polyhedron and by $b$ the number of vertices in the interiors of its faces. (The number of vertices on the edges is irrelevant.)

Consider Euclidean cone-metrics inside the polyhedron arising from deformations of the interior edges of the triangulation. Then the matrix
\begin{equation}
\label{eqn:JacHess}
\left(\frac{\partial \kappa_e}{\partial \ell_f}\right) = \left(\frac{\partial^2 S}{\partial \ell_e \partial \ell_f}\right)
\end{equation}
has corank $3i + b$ and exactly $i$ positive eigenvalues.
\end{thm}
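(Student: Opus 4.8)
The plan is to establish the two assertions by rather different routes. Throughout one works at the flat metric, where $P$ is literally a convex polytope in $\R^3$ and $\kappa_e=0$ for every interior edge; in particular \eqref{eqn:JacHess} is the genuine (symmetric) Hessian of the $C^2$ function $S$ on the space of deformations of the interior edge lengths, with the boundary edge lengths held fixed. For the corank, its kernel is by definition the space of curvature-preserving deformations, and the point of the flatness is that every such deformation comes from an honest infinitesimal motion of the vertices of $T$ in $\R^3$. First I would exhibit $3i+b$ independent ones: three parameters for moving each of the $i$ interior vertices, and one parameter for each of the $b$ vertices lying in the relative interior of a $2$-face $F$ of $P$, namely the push of such a vertex normal to $F$ (this changes the boundary edge lengths only to second order, keeps the solid flat, and moves the incident interior edges to first order). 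A vertex on a $1$-dimensional edge of $P$, or at a vertex of $P$, admits no first-order motion fixing all the boundary edges it is incident to, which is why those vertices are irrelevant. To see there are no further curvature-preserving deformations, note that $\dot\kappa_e\equiv 0$ forces the interior to stay flat to first order, so the deformation develops to an infinitesimal motion of the vertices of $T$ in $\R^3$, well defined modulo rigid motions; restricted to the boundary this is an infinitesimal isometric deformation of the convex polyhedral surface $\partial P$ (now carrying $b$ flat vertices in the interiors of its faces and some irrelevant flat vertices on its edges). By the infinitesimal rigidity of convex polyhedra from Section~\ref{sec:InfRigConv}, in the version allowing flat vertices, such a boundary deformation is trivial modulo the $b$ normal pushes; together with the $3i$ free interior-vertex motions this gives exactly $\dim\ker=3i+b$.

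For the positive index the plan is an induction on the number of simplices, reducing $(P,T)$ to a single tetrahedron by inverse Pachner moves and tracking the signature. The base case is an unsubdivided tetrahedron: $i=b=0$, empty matrix, $n_+=0=i$. Inserting a new vertex $v$ in the interior of a $2$-face of $P$ (a boundary $1$-$3$ move) creates exactly one new interior edge $vw$, where $w$ is the apex of the tetrahedron below; the normal push of $v$ shows this edge is a kernel direction, and the angle-additivity identity $\theta_{v_1v_2v_3w}(v_1w)=\theta_{vv_1v_2w}(v_1w)+\theta_{vv_1v_3w}(v_1w)$ — valid in a neighbourhood of the flat configuration because $v$ lies in the wedge at $v_1w$ — shows that the curvatures of the old interior edges are unaffected by the new variable, so by symmetry of the Hessian the new matrix is the old one bordered by a zero row and column: $b$ and the corank go up by $1$, $n_+$ is unchanged. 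Inserting a new vertex $a'$ in the interior of a tetrahedron $\sigma=xyzw$ of the current triangulation (a $1$-$4$ move) creates four new interior edges $a'x,\dots,a'w$; the analogous identity $\theta_{\sigma}(xy)=\theta_{a'xyz}(xy)+\theta_{a'xyw}(xy)$ decouples these from every old interior edge, so the new Hessian splits as a direct sum of the old one and the $4\times 4$ Hessian of $S$ for the star-like triangulation of the convex tetrahedron $\sigma$ with apex $a'$. By the theorem of Section~\ref{sec:InfRigConv} applied to $\sigma$, that $4\times 4$ block has signature $(+,0,0,0)$; hence $i$, the corank and $n_+$ increase by $1$, $3$ and $1$ respectively, matching the change in $i$. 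It remains to dispose of interior $2$-$3$ and $3$-$2$ moves and boundary $2$-$2$ flips, which change neither $i$ nor $b$; combining these with the previous cases one connects any $(P,T)$ to the base case, and the theorem follows provided each such move leaves $n_+$ invariant, changing only $n_-$ by one or zero.

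That last point is where I expect the real difficulty. Unlike the vertex insertions, a $2$-$3$ move does not produce a direct-sum decomposition: the new interior edge $de$ has $\partial\kappa_{de}/\partial\ell_f\neq 0$ for old edges $f$ even at the flat configuration, because the three new tetrahedra around $de$ reproduce the old dihedral angles of the bipyramid only along the flat locus, not as a function identity. What one does have for free is that $T$ and $T'$ share the same vertices, hence the same trivial deformations, so the common $(3i+b)$-dimensional kernel sits inside both Hessians and the restriction of the new Hessian to the hyperplane of ``old'' deformations is exactly the old Hessian (here using that $S$ depends only on the metric); one also has the sign $\partial\kappa_{de}/\partial\ell_{de}<0$, since lengthening an edge opens the dihedral angle at it in every incident tetrahedron, so the total angle around $de$ exceeds $2\pi$. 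The missing ingredient is then the sign of a Schur complement — $\partial\kappa_{de}/\partial\ell_{de}$ minus a correction built from the old Hessian — and controlling this in general is the crux. I would look for it either in the weighted-Delaunay description of the relevant triangulations, or in a global comparison exploiting that $(T,\ell)$ and $(T',\ell')$ represent the very same flat metric, in order to pin the correction down. The corank computation, by contrast, is insensitive to all of this and is complete after the first paragraph.

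A possible alternative to the whole inductive scheme would be to imitate Section~\ref{sec:InfRigConv} directly and identify $\left(\partial^2 S/\partial\ell_e\partial\ell_f\right)$ with the second variation of the volume of a polar-dual body, whose signature is governed by the second Minkowski inequality; but for a general interior triangulation the polar-dual picture is no longer transparent, which is why the move-by-move reduction — once the $2$-$3$ move is understood — seems the more robust route.
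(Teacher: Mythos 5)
Your corank computation is essentially the right argument: at the flat metric a curvature\--preserving deformation develops to a first\--order motion of the vertices of $T$ in $\R^3$, and the count reduces to classifying boundary deformations. But note that the step you quote as ``infinitesimal rigidity of convex polyhedra in the version allowing flat vertices'' is not literally the statement of Section~\ref{sec:InfRigConv}: you need the separate lemma that a flat vertex in the relative interior of a face contributes exactly the one\--dimensional normal push, while a flat vertex on an edge of $P$ contributes nothing. That lemma is true and is proved in \cite{IS10}, but in your write\--up it is asserted, not proved; as it stands the corank part is an outline resting on an unproved (though correct) rigidity statement.

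The positive\--index half has genuine gaps. First, the one you admit: you have no control of the inertia under $2$\--$3$, $3$\--$2$ and boundary $2$\--$2$ moves, and the sign of the relevant Schur complement is precisely the hard part; without it the induction does not close and ``exactly $i$ positive eigenvalues'' is not established. Second, the reduction scheme itself is not available even granting that step: your moves insert or delete vertices interior to $P$ or to its faces, but they cannot remove genuine vertices of the convex polyhedron (nor, as listed, vertices on its edges), so the base case cannot be a single tetrahedron; the unavoidable base case is an arbitrary triangulation of $P$ using only its own vertices, i.e.\ exactly the negative\--definiteness Corollary, a substantial part of the theorem which your scheme leaves unproved. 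Moreover, the assumption that any geometric triangulation of a convex $3$\--polytope can be joined to a fixed one by \emph{geometric} bistellar moves, through straight triangulations of the same polytope, is itself unproved: flip\--connectivity fails for point configurations in higher dimensions (Santos) and is open in dimension~$3$, so the induction has no guaranteed route to travel. Third, a smaller but real error: the angle\--additivity identities you invoke for the $1$\--$3$ and $1$\--$4$ insertions hold only on the flat locus (where the new vertex lies in the old face or simplex), not as identities in the independent length variables; hence the new row and column are not shown to vanish and the Hessian need not split as a direct sum. The conclusions for these two moves (corank up by $1$ resp.\ $3$, positive index up by $0$ resp.\ $1$) are correct, but should be derived differently, e.g.\ by checking that the Schur complement of the new block equals the old Hessian (extend an old deformation by keeping the new vertex flatly placed inside the deformed simplex) and applying inertia additivity. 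Finally, be aware that the survey you are reading contains no proof of this theorem --- it cites \cite{IS10} --- and the proof given there does not rely on connecting triangulations by flips, so your plan is not a reconstruction of the known argument but a different strategy whose crucial steps (flip connectivity and the $2$\--$3$ move) remain open in your proposal.
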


\begin{cor}
Let $T$ be a triangulation of a convex polyhedron that uses only vertices of this polyhedron. Then the matrix \eqref{eqn:JacHess} is negative definite.
\end{cor}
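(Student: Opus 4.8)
The plan is to deduce the corollary directly from Theorem \ref{thm:SignPoly}. The first step is purely a matter of unwinding the hypothesis: saying that the triangulation $T$ uses only vertices of the convex polyhedron means precisely that $T$ has no vertex in the interior of the polyhedron and no vertex in the interior of any of its faces. In the notation of Theorem \ref{thm:SignPoly} this is exactly $i = 0$ and $b = 0$. (Vertices sitting on the edges of the polyhedron are permitted, but the theorem explicitly records that they are irrelevant to the conclusion, so no further care is needed there.)

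The second step is to feed these values into Theorem \ref{thm:SignPoly}. It then asserts that the matrix \eqref{eqn:JacHess} has corank $3i + b = 0$, hence is nondegenerate, and has exactly $i = 0$ positive eigenvalues. The matrix is symmetric: it is the Hessian $\left(\partial^2 S/\partial \ell_e \partial \ell_f\right)$, and equivalently the Jacobian $\left(\partial \kappa_e/\partial \ell_f\right)$, which is symmetric by equality of mixed partials. A nondegenerate real symmetric matrix with no positive eigenvalues has all eigenvalues strictly negative, i.e. is negative definite. This finishes the proof.

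There is no genuine obstacle here — all the substance has been absorbed into Theorem \ref{thm:SignPoly} — but two points deserve a moment's attention rather than being passed over in complete silence. One is the bookkeeping in the first step, making sure that ``uses only vertices of the polyhedron'' is correctly identified with $i = b = 0$ and not, say, with the stronger requirement that $T$ also have no interior edges. The other is the degenerate case in which the polyhedron is itself a simplex and $T$ has no interior edges: then \eqref{eqn:JacHess} is the empty $0 \times 0$ matrix, which is vacuously negative definite, in agreement with the statement. Conceptually, the corollary is the extreme instance of Theorem \ref{thm:SignPoly} in which none of the flexibility produced by movable interior or face vertices is present, so the quadratic form $\left(\partial^2 S/\partial \ell_e \partial \ell_f\right)$ is as nondegenerate as it can be; in particular its nondegeneracy is exactly what one needs for the associated infinitesimal rigidity statements.
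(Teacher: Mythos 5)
Your proposal is correct and matches the paper's intent exactly: the corollary is just the specialization $i=b=0$ of Theorem \ref{thm:SignPoly}, giving a symmetric matrix of corank $0$ with no positive eigenvalues, hence negative definite. The paper leaves this deduction implicit, and your bookkeeping (including the remark about vertices on edges being irrelevant) is consistent with it.
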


\begin{proof}[Proof of Theorem \ref{thm:WeakConv}]
In the triangulation $T$ of $\conv P$, take the subcomplex $\bar T$ that triangulates $P$. The Hessian matrix of $S$ for $\bar T$ is a principal minor of the Hessian of $S$ for $T$. Since the latter is negative definite, so is the former. In particular, it is non-degenerate. Hence it is impossible to change the lengths of interior edges of $\bar T$ without changing the curvatures in the first order. Thus $P$ is infinitesimally rigid.
\end{proof}

\begin{rem}
In the smooth case, the space of all infinitesimal deformations of a Riemannian metric can be decomposed as a direct sum of conformal, trivial, and anti-conformal deformations. The restriction of the second variation $D^2S$ to the space of conformal deformations is positive definite; trivial deformations don't change the value of $S$; and on the space of the anti-conformal deformations $D^2S$ is negative definite, provided that the spectrum of the curvature operator satisfies certain assumptions, \cite[Chapters 4G, 12H]{Bes87}.

In the discrete case, trivial deformations arise from arbitrary displacements of the interior vertices and from displacements of vertices inside the faces orthogonally to those faces. This space has dimension $3i+b$. Conformal deformations should correspond in ``blowing up'' at each vertex independently, thus their space has dimension $i$. Thus the signature of $D^2S$ as stated in Theorem \ref{thm:SignPoly} fits very well with what is known in the smooth case.
\end{rem}

Among other works dealing with the signature of the second variation of the discrete Hilbert-Einstein functional let us mention \cite{CR96, Gli11}.

\section{Directions for the future research}
Let $M$ be a closed hyperbolic $3$-manifold with a geodesic triangulation $T$. Then the infinitesimal rigidity of $M$ (also known as Calabi-Weil rigidity \cite{Wei60,Cal61}) is equivalent to $\dim\ker D^2S = 3i$, where $i$ is the number of vertices of $T$. It should be possible to determine the rank of $D^2S$ (or even better, the signature) by a sort of discrete Bochner method. This would yield a new proof of the Calabi-Weil theorem.
A similar method should work for cone-manifolds.
If $M$ is a hyperbolic cone-manifold, then $M$ is infinitesimally rigid (in the sense that any deformation preserving the cone-angles is trivial) provided that $\omega_e \le 2\pi$ around all edges $e$, \cite{MM11,Wei13}; without this assumption $M$ may be infinitesimally flexible, \cite{Izm11}.

For ideal triangulations of hyperbolic manifolds, the functional is concave. This makes cusped manifolds the first case to try to reprove the hyperbolization theorem.

The functional is concave also for semiideal triangulations, if all finite vertices lie on the boundary. This was used in \cite{FI09} to prove the existence of a hyperbolic cusp with a given cone-metric on the boundary. A generalization of this would be realizability of an arbitrary metric with curvature bounded below by $-1$ on the boundary of some hyperbolic cusp. On one hand, this should follow from the polyhedral case by an approximation argument; on the other hand, it would be interesting to find a variational proof that uses an extension of the Hilbert-Einstein functional to more general metric spaces. In particular, this is related to the problem at the end of Remark \ref{rem:Approx}.

\end{document}